\newtheorem{thm}{Theorem}[section]
\newtheorem{cor}[thm]{Corollary}
\newtheorem{lem}[thm]{Lemma}
\newtheorem{rem}[thm]{Remarks}
\newcommand{\T}{\mathbb{T}}
\newcommand{\D}{\mathbb{D}}
\newcommand{\cD}{{\mathcal{D}}}
\newcommand{\cA}{{\mathcal{A}}}
\renewcommand{\Re}{\mathrm{Re }}
\renewcommand{\Im}{\mathrm{Im }}
\newcommand{\hH}{\mathrm{H^{2}   }}
\newcommand{\caap}{\mathrm{cap}}
\newcommand{\Hol}{\mathrm{Hol}}
\newcommand\ZZ{\mathbb{Z}}
\newcommand\TT{\mathbb{T}}
\newcommand\DD{\mathbb{D}}
\begin{document}

\title[Level sets and composition operators]{Level sets and Composition operators on the Dirichlet space}

\author[El-Fallah]{O. El-Fallah$^1$}
\address{$^1$ D\'{e}partement de Math\'ematiques\\ Universit\'e Mohamed V\\  B.P. 1014 Rabat\\Morocco}
\email{elfallah@fsr.ac.ma}

\author[Kellay]{K. Kellay$^2$}
\author[Shabankhah]{M. Shabankhah$^2$}
\author[Youssfi]{H. Youssfi$^2$ }
\address{$^2$ CMI\\ LATP\\ Universit\'e de Provence\\ 39, rue F. Joliot-Curie\\   13453  Marseille \\France}
\email{kellay@cmi.univ-mrs.fr}
\email{mshabank@cmi.univ-mrs.fr}
\email{youssfi@cmi.univ-mrs.fr}
\thanks{{1. Research partially supported by  a grant from Egide Volubilis}}
\thanks{{2. Research partially supported by grants from Egide Volubilis  and ANR Dynop}}
\keywords{Dirichlet space, composition operators, capacity}
\subjclass[2000]{47B38, 30H05, 30C85, 47A15}

\maketitle

\begin{abstract} We consider composition operators in the Dirichlet space of the unit disc in the plane. Various criteria on boundedness, compactness and Hilbert-Schmidt class membership are established.  Some of these criteria are shown to be optimal. 
\end{abstract}
\section{Introduction}

In this note we consider composition operators   in the Dirichlet space of  the unit disc.
A comprehensive study of composition operators in function spaces and their spectral behavior could be
found in \cite{CM,S,Z}. See also \cite{GG, GG2, GG1, T, WX, Zo}
for a treatment of some of the questions addressed in this paper.

Let  $\DD$ be the unit disc in the complex plane and let  $\TT=\partial \DD$ be its boundary.
We denote by $\cD$ the classical Dirichlet  space. This is the space of all analytic functions
$f$ on  $\DD$ such that
 $$\cD(f):=\int_\DD|f'(z)|^2 \,dA(z)<\infty,$$
where $dA(z)=dxdy/\pi$ stands for the normalized area measure in $\DD$. We call $\cD(f)$ the Dirichlet integral of $f$.
The space $\cD$ is endowed with the norm
$$\|f\|_{\cD}^2:=|f(0)|^2+\cD(f).$$

It is standard that a function $f(z)=\sum_{n=0}^\infty \widehat{f}(n) \, z^n$,
holomorphic on $\DD$, belongs to $\cD$
if and only if
$$
\sum_{n\geq 0}|\widehat{f}(n)|^2\, (1+n) < \infty,
$$
and that this series defines an equivalent norm on $\cD$.

Since the Dirichlet space is contained in the Hardy space $\hH (\DD)$,
every function $f \in \cD$ has non-tangential limits $f^*$ almost everywhere on $\TT$.
In this case, however, more can be said.
Indeed, 
Beurling \cite{C} showed that 
if $f \in \cD$
then $f^*(\zeta) = \lim_{r\to 1} f(r\zeta)$ exists for $\zeta \in \TT$ outside  of a set of logarithmic capacity zero.

Let $\varphi$ be a holomorphic self-map of $\D$.  The composition operator $C_\varphi$
on $\cD$ is defined by
$$C_\varphi(f)=f\circ\varphi,\qquad f\in \cD.$$
We are interested herein in describing the spectral properties of the
composition operator $C_\varphi$, such as compactness and Hilbert-Schmidt class membership,
in terms of the size of the level set of  $\varphi$.
For $s\in (0,1)$, the level set $E_\varphi(s)$ of $\varphi$ is given by
$$E_\varphi(s)=\{\zeta\in \TT \text{: } |\varphi(\zeta)|\geq s\}.$$
We give new characterizations of Hilbert-Schmidt class membership in the case of
the Dirichlet space. We  also establish the sharpness of these results.

\section{A general criterion}
For $\alpha>-1$, $dA_\alpha$ will denote the finite measure on $\DD$
given by
$$
dA_\alpha(z) := (1+\alpha)(1-|z|^2)^\alpha dA(z). 
$$
For $p\geq 1$ and $\alpha>-1$, the weighted  Bergman space
$\cA^p_\alpha$ consists of the holomorphic functions $f$ on $\DD$ for which
$$\|f\|_{p,\alpha} := \Big[\int_\DD |f(z)|^p \, dA_\alpha(z)\Big]^{1/p} \,<\, \infty.$$

We denote by  $\cD_\alpha^p$ the space consisting of analytic functions $f$  on $\DD$ such that
$$\|f\|_{\cD_\alpha^p}^{p} := |f(0)|^p+\|f'\|_{p,\alpha}^{p} \,<\, \infty.$$
Appropriate choices of the parameter  $\alpha$ give, with equivalent norm,  all the standard holomorphic function spaces. Indeed, 
The Hardy space $\hH$ can be identified  with $\cD_1^2$.  
The classical Besov space is precisely $\cD^{p}_{p-2}$, and if $p<\alpha+1$,
$\cD_\alpha^p=\cA^{p}_{\alpha-2}$.  Finally, the classical Dirichlet space $\cD$ is identical to $\cD^{2}_{0}$.

We recall that, by the reproducing formula, one has
\begin{equation}\label{kernel}
f(z)=\int_\DD \frac{f(w)}{(1-\overline{w}z)^{2+\alpha}} \, dA_\alpha(w),\qquad z\in \DD,
\end{equation}
for every $f\in \cA^p_\alpha$ (see \cite{Z}).
\begin{lem}\label{besovestimate} Let $p\geq 1$ and let $\sigma>-1$.
Then, there exists a constant $C$ depending only on $p$ and $\sigma$  such that
\begin{eqnarray*}
|f(z)|^p &\leq & C\int_\DD \frac{|f(\lambda)|^p}{|1-\overline{\lambda}z|^{2+\sigma}} \, dA_{\sigma}(\lambda),
\end{eqnarray*}
for every $f\in \cA_\sigma^p$ and $z\in \DD$.
\end{lem}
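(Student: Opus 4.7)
The plan is to combine the subharmonicity of $|f|^p$ on $\DD$ with a pointwise comparability estimate for the kernel on a small Euclidean disc centred at $z$; this avoids any use of the reproducing formula \eqref{kernel}, and treats all $p\geq 1$ uniformly (in particular, the constant will end up depending only on $\sigma$).

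Concretely, I would fix $z\in\DD$ and set $r=(1-|z|)/2$, so that $D(z,r)\subset\DD$. Since $f$ is holomorphic and $p\geq 1$, the function $|f|^p$ is subharmonic on $\DD$, so the sub-mean value inequality gives
\begin{equation*}
|f(z)|^p \leq \frac{1}{r^2}\int_{D(z,r)}|f(\lambda)|^p\,dA(\lambda).
\end{equation*}
Next, I would observe that for $\lambda=z+\xi\in D(z,r)$ with $|\xi|<r$, one has $1-\overline{\lambda}z=(1-|z|^2)-\overline{\xi}z$ and $\bigl||\lambda|-|z|\bigr|\leq r$; elementary two-sided estimates then furnish
\begin{equation*}
|1-\overline{\lambda}z|\asymp 1-|z|^2\asymp r, \qquad 1-|\lambda|^2\asymp 1-|z|^2,
\end{equation*}
with absolute constants. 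Consequently
\begin{equation*}
\frac{(1-|\lambda|^2)^\sigma}{|1-\overline{\lambda}z|^{2+\sigma}}\asymp \frac{1}{(1-|z|^2)^2}\asymp\frac{1}{r^2},\qquad \lambda\in D(z,r),
\end{equation*}
where the implicit constants depend only on $\sigma$.

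Combining the two bounds and recalling that $dA_\sigma=(1+\sigma)(1-|\lambda|^2)^\sigma\,dA$, I obtain
\begin{equation*}
\int_\DD\frac{|f(\lambda)|^p}{|1-\overline{\lambda}z|^{2+\sigma}}\,dA_\sigma(\lambda) \;\geq\; \int_{D(z,r)}\frac{|f(\lambda)|^p}{|1-\overline{\lambda}z|^{2+\sigma}}\,dA_\sigma(\lambda) \;\gtrsim\; \frac{1}{r^2}\int_{D(z,r)}|f(\lambda)|^p\,dA(\lambda) \;\gtrsim\; |f(z)|^p,
\end{equation*}
with constants depending only on $\sigma$, which is exactly the desired inequality. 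I do not anticipate any real obstacle: the only work is the pair of elementary two-sided estimates on $D(z,r)$ and the classical subharmonic sub-mean value property. (As a sanity check, the case $p=1$ follows alternatively from \eqref{kernel} with $C=1$ by the triangle inequality, but the subharmonicity approach has the advantage of covering all $p\geq1$ at once.)
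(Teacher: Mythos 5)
Your proof is correct, but it takes a genuinely different route from the paper's. The paper applies the reproducing formula \eqref{kernel} to the auxiliary function $\lambda\mapsto f(\lambda)/(1-\lambda\overline{w})$, uses H\"older's inequality with exponents $p$ and $q=p/(p-1)$, sets $w=z$, and concludes with the Forelli--Rudin estimate \eqref{integral}; as written that argument really requires $p>1$ and the applicability of the reproducing formula to the auxiliary function. Your argument instead rests on the subharmonicity of $|f|^p$ on $D(z,(1-|z|)/2)$ together with the elementary two-sided comparisons $|1-\overline{\lambda}z|\asymp 1-|z|^2\asymp 1-|\lambda|^2$ on that disc, and all the steps check out: with the paper's normalization $dA=dxdy/\pi$ the sub-mean value inequality reads exactly as you wrote it, the restriction of the integral to $D(z,r)$ is legitimate because the integrand is nonnegative, and the factor $1+\sigma$ in $dA_\sigma$ is positive since $\sigma>-1$. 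What your approach buys: it treats all $p\geq 1$ (indeed all $p>0$) uniformly, yields a constant depending only on $\sigma$, and does not presuppose $f\in\cA^p_\sigma$. What the paper's approach buys is that it runs on exactly the same machinery (the kernel identity and \eqref{integral}) that is reused immediately afterwards in the proof of Theorem \ref{cg}, so the two computations share their bookkeeping. Either proof is acceptable.
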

\begin{proof}
By the above reproducing formula,
\begin{eqnarray*}
\frac{f(z)}{1-z\overline{w}} & = & \int_\DD \frac{f(\lambda)}{1-\lambda\overline{w}}\,
\frac{dA_{\sigma }(\lambda)}{(1-\overline{\lambda}z)^{2+\sigma }},\qquad z,w\in \DD,
\end{eqnarray*}
for every $f\in \cA_\sigma^p$.
By  H\"{o}lder's inequality, with $q=p/(p-1)$, 
$$
\frac{|f(z)|^p}{|1-z\overline{w}|^{p}} \leq  \int_\DD \frac{|f(\lambda)|^p  dA_{\sigma }(\lambda)}{|1-\overline{\lambda}z|^{2+\sigma }} \times
 \left(\int_\DD \frac{dA_{\sigma}(\lambda)}{|1-\lambda\overline{w}|^{ q}  |1-\lambda\overline{z}|^{(2+\sigma) p}}\right)^{\frac{p}{q}}.$$
Taking  $w=z$, and using the standard estimate (\cite[Lemma 3.10]{Z})
 \begin{equation}\label{integral}
 \int_\DD \frac{dA_c(\lambda)}{|1-z\overline{\lambda}|^{2+c+d}} \asymp \frac{1}{(1-|z|^2)^d}, \qquad \text{if  }d>0 \text{ , }Êc>-1,
 \end{equation}
we get the desired conclusion.
  \end{proof}
For $\lambda\in \DD$, consider the test function
 $$F_{\lambda,\beta}(z)=\frac{1}{(1-\overline{\lambda}z)^{1+\beta}},\qquad z\in \DD.$$
If  $\beta \geq 0$ is chosen such that $\delta:=\delta(p,\alpha,\beta)=2+\beta-(2+\alpha)/p>0$, 
 by \eqref{integral}, we have
 $$\|F_{\lambda,\beta}\|_{\cD_\alpha^p}^{p}\asymp (1-|\lambda|^2)^{-p\delta}.$$

The following theorem unifies and generalizes the previously known results of
MacCluer \cite[Theorem 3.12]{CM},  Tjani \cite[Theorem 3.5]{T} and Wriths-Xiao \cite[Theorem 3.2]{WX}
on Hardy, Besov and weighted Dirichlet spaces, respectively.

As mentioned before, the proof we provide here is short and simple.
\begin{thm}\label{cg} Let $p>1$. Suppose $\varphi\in \cD^p_\alpha$ satisfies $\varphi(\DD)\subset\DD$.
Fix $\beta\geq 0$ such that $\delta:=\delta(p,\alpha,\beta)=2+\beta-(2+\alpha)/p> 0$. Then \\
\begin{enumerate}
\item[(a)] $C_\varphi$ is bounded on $\cD_{\alpha}^{p} \displaystyle \iff \sup_{\lambda\in\DD}\;(1-|\lambda|^2)^{\delta}\|F_{\lambda,\beta}\circ\varphi\|_{\cD_\alpha^p}<\infty$;\\
\item[(b)] $C_\varphi$ is  compact on  $\cD_\alpha^p \displaystyle \iff \lim_{|\lambda|\to 1} (1-|\lambda|^2)^{\delta}\|F_{\lambda,\beta}\circ\varphi\|_{\cD^p_\alpha}=0.$
\end{enumerate}
\end{thm}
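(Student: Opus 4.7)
The theorem has the usual two directions of a test-function characterization, and the strategy is to exploit the fact that, by the computation just before the theorem, $g_\lambda := (1-|\lambda|^2)^{\delta} F_{\lambda,\beta}$ is a norm-bounded family in $\cD_\alpha^p$ which, moreover, converges weakly to $0$ in $\cD_\alpha^p$ as $|\lambda|\to 1$. Necessity in (a) is then immediate: if $C_\varphi$ is bounded on $\cD_\alpha^p$ then $\|C_\varphi g_\lambda\|_{\cD_\alpha^p}$ is uniformly bounded in $\lambda$, which is exactly the stated sup condition. Necessity in (b) is the standard fact that a compact operator sends weakly-null sequences to norm-null sequences, applied to $g_{\lambda_n}$ for any sequence $|\lambda_n|\to 1$.

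For the sufficiency direction in (a), the plan is to combine Lemma \ref{besovestimate} applied to $f'\in\cA^p_\sigma$ with Fubini. Concretely, write
\[
\|C_\varphi f\|_{\cD_\alpha^p}^{p} = |f(\varphi(0))|^p + \int_\DD |f'(\varphi(z))|^p\,|\varphi'(z)|^p\,dA_\alpha(z),
\]
and apply Lemma \ref{besovestimate} with the parameter $\sigma$ chosen to satisfy $2+\sigma=(2+\beta)p$. Then
\[
|f'(\varphi(z))|^p \,\lesssim\, \int_\DD \frac{|f'(\lambda)|^p}{|1-\overline{\lambda}\varphi(z)|^{(2+\beta)p}}\,dA_\sigma(\lambda).
\]
Integrating against $|\varphi'(z)|^p\,dA_\alpha(z)$ and exchanging the order of integration (the integrand is non-negative), the inner integral in $z$ is precisely $\asymp \|F_{\lambda,\beta}\circ\varphi\|_{\cD_\alpha^p}^{p}$, up to a harmless $|F_{\lambda,\beta}(\varphi(0))|^p$ boundary term. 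Under the hypothesis of (a), this inner quantity is $\lesssim (1-|\lambda|^2)^{-p\delta}$, so
\[
\|C_\varphi f\|_{\cD_\alpha^p}^{p} \,\lesssim\, |f(\varphi(0))|^p + \int_\DD |f'(\lambda)|^p\,(1-|\lambda|^2)^{-p\delta}\,dA_\sigma(\lambda).
\]
The bookkeeping miracle — and the one step I would double-check with care — is that the definition $\delta = 2+\beta-(2+\alpha)/p$ together with $\sigma=(2+\beta)p-2$ gives exactly $\sigma-p\delta=\alpha$, so $(1-|\lambda|^2)^{-p\delta}\,dA_\sigma(\lambda)\asymp dA_\alpha(\lambda)$ and the right-hand side collapses to a multiple of $\|f\|_{\cD_\alpha^p}^{p}$.

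For the sufficiency direction in (b), the same computation is used, but the hypothesis is now that $(1-|\lambda|^2)^{p\delta}\|F_{\lambda,\beta}\circ\varphi\|_{\cD_\alpha^p}^{p}\to 0$ as $|\lambda|\to 1$. Given $\varepsilon>0$, split the Fubini integral over $\{|\lambda|<r\}$ and $\{|\lambda|\geq r\}$ with $r$ close enough to $1$ that the tail is bounded by $\varepsilon\|f\|_{\cD_\alpha^p}^{p}$. Applying this estimate to a sequence $(f_n)$ that is bounded in $\cD_\alpha^p$ and converges to $0$ uniformly on compact subsets of $\DD$ (in particular $f_n'\to 0$ uniformly on $\{|\lambda|\leq r\}$), one concludes $\|C_\varphi f_n\|_{\cD_\alpha^p}\to 0$, which is the standard criterion for compactness on $\cD_\alpha^p$. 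The main technical obstacle, apart from verifying the exponent identity $\sigma-p\delta=\alpha$, is justifying Fubini and the $|F_{\lambda,\beta}(\varphi(0))|^p$ boundary term, but these are routine once the right $\sigma$ has been chosen.
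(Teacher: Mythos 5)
Your proposal is correct and follows essentially the same route as the paper: the pointwise estimate of Lemma \ref{besovestimate} applied to $f'$ with $\sigma=(2+\beta)p-2$, Fubini, the exponent identity $\sigma-p\delta=\alpha$ to recognize $dA_\alpha$, and the split of the $\lambda$-integral over $r\DD$ and $\DD\setminus r\DD$ for compactness. The necessity directions via the normalized test functions $(1-|\lambda|^2)^{\delta}F_{\lambda,\beta}$ (norm-bounded, tending to $0$ on compacta) are also exactly the paper's argument.
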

\begin{proof}   To prove
(a), we observe that if  $C_\varphi$ is bounded , then
$$
\displaystyle \|F_{\lambda,\beta}\circ\varphi\|_{\cD_\alpha^p}=O((1-|\lambda|^2)^{-\delta}).
$$
For the converse  we may assume, without loss of generality, that $\varphi$ fixes the origin.
It follows from  Lemma 2.1 that, for $f\in \cD_\alpha^p$,
\begin{eqnarray*}
& &  \int_\DD|\varphi'(z)|^p|f'(\varphi(z))|^p \, dA_\alpha(z) \\
&\leq &C\int_\DD|\varphi'(z)|^p\Big(\int_\DD \frac{|f'(\lambda)|^p}{|1-\overline{\lambda}\varphi(z)|^{(2+\beta)p}} \, dA_{2p+\beta p -2}(\lambda)\Big)\, dA_\alpha(z)\\
&=&       C\int_\DD |f'(\lambda)|^p \Big((1-|\lambda|^2)^{2p+\beta p-2-\alpha}\int_\DD \frac{ |\varphi'(z)|^p}{|1-\overline{\lambda}\varphi(z)|^{(2+\beta)p}} \, dA_\alpha(z)\Big) \, dA_\alpha(\lambda)    \\
&=&  C \int_\DD|f'(\lambda)|^p (1-|\lambda|^2)^{p\delta} \| (F_{\lambda,\beta} \circ \varphi)' \|_{p,\alpha}^{p} \, dA_\alpha(\lambda).
\end{eqnarray*}
Therefore  part  (a) follows.

(b) Without loss of generality we assume that $\varphi(0)=0$.
Note that $C_\varphi$ is compact on $\cD_\alpha^p$ if and only if for every bounded
sequence $(f_n)_n \subset \cD_\alpha^p$ such that $f_n \to 0$  uniformly on compact subsets of $\DD$,
we have $\|C_\varphi(f_n)\|_{\cD_\alpha^p}\to 0$, as $n \to \infty$.

 Suppose that $C_\varphi$ is compact.  Since $(1-|\lambda|^2)^{\delta}F_{\lambda,\beta}\to 0$ uniformly on compact subsets of the unit disc, as $|\lambda| \to 1$,
 we see that 
 $$\|C_\varphi(F_{\lambda,\beta})\|_{\cD_\alpha^p}=o((1-|\lambda|^2)^{-\delta}).$$

Conversely, assume that $\displaystyle \lim_{|\lambda| \to 1}(1-|\lambda|^2)^{\delta} \|F_{\lambda,\beta} \circ \varphi \|_{\cD_\alpha^p}=0$.
Let $(f_n)_n $ be a bounded sequence of $\cD_\alpha^p$ such that $f_n \to 0$  uniformly on compact sets. Since $f'_n \to 0$  uniformly on compact sets,  it follows from  the proof of  part $(a)$  and the hypothesis that, for $r$ close enough to 1,
\begin{multline*}
  \|C_\varphi (f_n) \|_{\cD_\alpha^p}^p - |f_n(0)|^p
  \leq   \int_{r\D}  |f'_n(\lambda)|^p  (1-|\lambda|^2)^{p\delta}\|( F_{\lambda,\beta} \circ \varphi) \|_{p,\alpha}^{p}dA_\alpha(\lambda)
\\ 
 +   \int_{\D \setminus r\D } |f'_n(\lambda)|^p (1-|\lambda|^2)^{p\delta} \| (F_{\lambda,\beta} \circ \varphi)' \|_{p,\alpha}^{p}dA_\alpha(\lambda) \to 0, \quad n\to \infty
\end{multline*}
which finishes the proof.
\end{proof}

The following result is an immediate consequence of Theorem \ref{cg}.

\begin{cor}\label{limitdirichlet} Let $\varphi: \DD \to \DD$ such that $\varphi\in \cD$. \\
\begin{enumerate}
\item[(a)] If $\displaystyle \sup_{n\geq 1} \, \cD(\varphi^n)<\infty$, then $C_\varphi$ is bounded;\\
\item [(b)] If  $\displaystyle \lim_{n\to \infty} \, \cD(\varphi^n)=0$, then $C_\varphi$ is compact.

\end{enumerate}
\end{cor}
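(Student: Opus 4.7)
The plan is to apply Theorem~\ref{cg} in the case $p=2$, $\alpha=0$, $\beta=0$, for which $\delta=1$ and the test function is
$$F_{\lambda,0}(z)=\frac{1}{1-\bar\lambda z}=\sum_{n\geq 0}\bar\lambda^n z^n.$$
Composition with $\varphi$ formally yields $F_{\lambda,0}\circ\varphi=\sum_{n\geq 0}\bar\lambda^n \varphi^n$, an expression that distributes the Dirichlet norm among the powers of $\varphi$, which is exactly what connects the criterion of Theorem~\ref{cg} to the hypothesis on $\cD(\varphi^n)$.

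The heart of the argument is the estimate
$$\|F_{\lambda,0}\circ\varphi\|_\cD \;\leq\; \sum_{n\geq 0}|\lambda|^n\,\|\varphi^n\|_\cD.$$
I would obtain this by applying the triangle inequality to the truncated composition $S_k\circ\varphi:=\sum_{n=0}^{k}\bar\lambda^n\varphi^n$ (each term belongs to $\cD$ since $|\varphi|\leq 1$ forces $\cD(\varphi^n)\leq n^2\cD(\varphi)<\infty$), and then passing to the limit $k\to\infty$ via the weak lower semicontinuity of the $\cD$-norm with respect to pointwise convergence on $\DD$, which is automatic since $\cD$ is a reproducing kernel Hilbert space and $S_k\to F_{\lambda,0}$ uniformly on compact subsets of $\DD$. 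Together with the identity $\|\varphi^n\|_\cD^2=|\varphi(0)|^{2n}+\cD(\varphi^n)\leq 1+\cD(\varphi^n)$, this reduces the problem to the scalar sequence $a_n:=\|\varphi^n\|_\cD$.

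For part (a), the hypothesis $\sup_n \cD(\varphi^n)<\infty$ makes $(a_n)$ bounded, so $\sum_{n\geq 0}|\lambda|^n a_n=O(1/(1-|\lambda|))$; multiplying by $(1-|\lambda|^2)$ yields a uniform bound in $\lambda$, and Theorem~\ref{cg}(a) delivers boundedness of $C_\varphi$. For part (b), $\cD(\varphi^n)\to 0$ together with $|\varphi(0)|<1$ forces $a_n\to 0$, and I would then invoke the elementary Abel-type fact that $(1-r)\sum_{n\geq 0}r^n a_n\to 0$ as $r\to 1^-$ whenever $a_n\to 0$: split at an index $N$ for which $a_n<\varepsilon$ when $n>N$, so the head is $O((1-r)N)$ and the tail is $O(\varepsilon)$. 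Setting $r=|\lambda|$ yields $(1-|\lambda|^2)\|F_{\lambda,0}\circ\varphi\|_\cD\to 0$, and Theorem~\ref{cg}(b) gives compactness. The only step demanding a bit of care is the semicontinuity passage in the main inequality; all the rest is bookkeeping.
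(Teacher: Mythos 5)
Your proof is correct, and it rests on the same strategy as the paper's: specialize Theorem~\ref{cg} to $p=2$, $\alpha=\beta=0$ (so $\delta=1$), expand the test function $F_{\lambda,0}$ as a geometric series in $\varphi$, and finish with an Abel-type summation. The execution differs in one genuine respect. The paper never leaves the quadratic quantity $\cD(\varphi^n)$: it bounds $|1-\bar\lambda\varphi(z)|^{-4}$ pointwise by $c\sum_n (n+1)^3|\lambda|^{2n}|\varphi(z)|^{2n}$, integrates term by term against $|\varphi'|^2\,dA$ (legitimate by positivity), recognizes $\int_\DD|\varphi'|^2|\varphi^n|^2\,dA=\cD(\varphi^{n+1})/(n+1)^2$, and concludes with the second-order Abel estimate $(1-r)^2\sum(n+1)r^n b_n\leq\sup b_n$ (resp.\ $\leq c\limsup b_n$). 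You instead apply the triangle inequality to the series of functions $\sum\bar\lambda^n\varphi^n$, which forces you to work with the linear quantity $a_n=\|\varphi^n\|_\cD$ and to justify the infinite triangle inequality by a limiting argument (your Cauchy/semicontinuity step, which is fine, as is your observation that each $\varphi^n\in\cD$). The paper's route is slightly cleaner on two counts: the term-by-term integration needs no functional-analytic limit argument, only Tonelli; and working with $\cD(\varphi^n)$ rather than $\|\varphi^n\|_\cD$ avoids your (correct but extra) appeal to $|\varphi(0)|<1$ to kill the constant term $|\varphi(0)|^{2n}$ in part (b). Your route buys a marginally more elementary computation --- a first-order rather than weighted Abel mean --- at the cost of those two technical detours. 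Both are complete proofs.
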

\begin{proof}
We consider the test function $F_{\lambda,0}$ with $\beta=\alpha=0$ and $p=2$. Both (a) and (b) follow from the following inequality:
\begin{eqnarray*}
\cD(F_{\lambda,0}\circ\varphi)&\leq& 2\, (\, 1-|\lambda|^2)^2 \, \int_\DD\frac{|\varphi'(z)|^2}{(1-|\lambda|^2\varphi(z)|^2)^4}\, dA(z)\\
&\leq& c\, (1-|\lambda|^2)^2 \, \sum_{n\geq 0} \,(n+1)^3 \, |\lambda|^{2n}\, \int_\DD |\varphi'(z)|^2| \, \varphi^n(z)|^2\, dA(z)\\
&=& c\, (1-|\lambda|^2)^2 \, \sum_{n\geq 0} \, (1+n)\, |\lambda|^{2n} \, \cD(\varphi^{n+1})\\
&\leq & c\, \limsup_{n\to\infty}\cD(\varphi^{n+1}).
\end{eqnarray*}
\end{proof}

\begin{rem}
\end{rem}
1. The compactness criterion for $C_\varphi$ in the  Bloch space is equivalent
to $\|\varphi^n\|_{\mathcal{B}}\to 0$ as was shown in \cite{WZZ} (see also \cite{M,T}).
In the case of the Hardy space $\hH $, however,
we know that if $C_\varphi$ is compact on $\hH $ then $\|\varphi^n\|_{\hH }\to 0$
but the converse does not hold \cite{CM}. Note that as before in the proof of Corollary \ref{limitdirichlet} ($\beta=0, \alpha=1$ and $p=2$) if $\|\varphi^n\|_{\hH }=o(1/\sqrt{n})$, then $C_\varphi$ is compact on $\hH $.

\vspace{1em}

2. The characterization of compact composition operators on the Dirichlet space
in terms of Carleson measures can be found in \cite{CM, T, Zo}. A positive Borel measure $\mu$ given on $\DD$ satisfying
$$\int_\DD |f(z)|^2\, d\mu(z)\leq \|f\|_{2,0}^2, \qquad f\in \cA_0^2,$$
is called a Carleson measure for $\cA_0^2$, i.e., the identity map $i_0:\cA_0^2\to \textrm{L}^2(\mu)$ is a  bounded operator.
 Such a measure has the following equivalent properties (see \cite[Theorem 7.4]{Z}).
A positive  Borel measure $\mu$ is a Carleson measure for $\cA_0^2$ if and only if
$$\sup_{\lambda\in \DD}(1-|\lambda|^2)^{2}\int_\DD\frac{d\mu(z)}{|1-\overline{\lambda}z|^{4}}<\infty,$$
or, equivalently,
$$\sup_{I\subset\TT}\ \mu(S(I))/|I|^{2} <\infty,$$
for any subarc  $I\subset \TT$ with arclengh $|I|$, and
  $S(I)$ is the Carleson box.

The measure $\mu$ is called vanishing (or compact) Carleson measure for $\cA_0^2$ if
the identity map $i_\alpha:\cA_0^2\to \textrm{L}^2(\mu)$ is a compact  operator. This happens if and only if
$$\lim_{|\lambda| \to 1}(1-|\lambda|^2)^{2}\int_\DD\frac{d\mu(z)}{|1-\overline{\lambda}z|^{4}}=0\iff\lim_{|I|\to0}\ \mu(S(I))/|I|^{2} =0.$$ 
Let $\varphi:\DD\to \DD$ be analytic and denote by $n_{\varphi}(z)$  the multiplicity of $\varphi$ at $z$.
By the change of variable formula \cite{S},
$$\|F_{\lambda,0}\|_{2,0}^p= (1-|\lambda|^{2})^{2}\int_\DD\frac{n_{\varphi}(z)\, dA(z)}{|1-\overline{\lambda}z|^{4}}.$$
Therefore, as a consequence of Theorem \ref{cg},  $C_\varphi$ is bounded in $\cD$ if and only if $n_{\varphi}(z)dA(z)$ is a Carleson measure for $\cA^2_0$ and  $C_\varphi$ is compact in $\cD$ if and only if $n_{\varphi}(z)dA(z)$ is a vanishing Carleson measure for $\cA^2_0$.
More explicitly, we have
$$\left\{\begin{array}{lll}
\displaystyle C_\varphi &\textrm{ is bounded in } \cD&\iff \displaystyle\sup_{I\subset\TT}\frac{1}{|I|^{2}}\int_{S(I)}n_{\varphi}(z)\, dA(z) <\infty;\\

\displaystyle C_\varphi &\textrm{ is compact  in } \cD&\iff \displaystyle\lim_{|I|\to0}\frac{1}{|I|^{2}}\int_{S(I)}n_{\varphi}(z)\, dA(z) =0.\\
\end{array}
\right.
$$

\section{Hilbert-Schmidt membership}

In the case of the Hardy  space $\hH $,  one can completely describe the membership of $C_\varphi$
in the Hilbert-Schmidt class in terms of the size of the level sets of the inducing map $\varphi$.
Indeed,  $C_\varphi$ is Hilbert-Schmidt in $\hH $ if and only if
$$\sum_{n \geq 0}\|\varphi^n\|^{2}_{\hH }=\int_\TT\frac{|d\zeta|}{1-|\varphi(\zeta)|^2}<\infty.$$
Given an arbitrary measurable function $f$ on $\TT$, consider the associated distribution function $m_f$ defined by
$$m_f(\lambda)=|\{\zeta\in \TT\text{ : } |f(\zeta)|>\lambda|\},\qquad \lambda>0.$$
It then follows that $C_\varphi$ is in the  Hilbert-Schmidt class of  $\hH $ if and only if
$$\int_\TT\frac{|d\zeta|}{1-|\varphi(\zeta)|^2} \,=\, \int_{1}^{\infty} m_{(1-|\varphi|^2)^{-1}}(\lambda)\, d\lambda
\asymp \int_{0}^{1}\frac{|E_\varphi(s)|}{(1-s)^2}\, ds<\infty.$$
It was shown by Gallardo--Gonz\'alez \cite[Theorem]{GG1} that there is a mapping $\varphi$ taking $\DD$ to itself
such that $C_\varphi$ is compact in $\hH $, and that the level set $E_\varphi(1)$ has Hausdorff measure equal to one. Recall that the Hausdorff dimension of $E$ 
$$d(E)=\inf\{\alpha \text{ : } \Lambda_\alpha(E)=0\}$$
where $\Lambda_\alpha(E)$ is the $\alpha$--Hausdorff measure of $E$ given by 
$$\Lambda_\alpha(E)=\lim_{\epsilon\to 0} \inf\Big\{\sum_{i}|\Delta_i|^\alpha\text{ : } E\subset\bigcup_i \Delta_i \text{ , } |\Delta_i|<\epsilon\Big\}.$$
Given  $E\subset \TT$ and $t>0$, let us write $E_t=\{\zeta\text{ : } d(\zeta,E)\leq t\}$ where $d$ denotes the arclength distance and $|E_t|$ denotes the Lebesgue measure of $E$. 

Let $E$ be a closed subset of $\TT$ with $|E_t|=O ((\log (e/t))^{-3})$ and  $E$ has Hausdorff  dimension one. (such examples can be given by generalized Cantor sets  \cite{C}). Let $\omega(t)=(\log (e/t))^{-2}$, and consider the outer function given by 
$$|f_{\omega,E}(\zeta)|=e^{-w(d(\zeta,E))},\qquad \text{ a.e on }\TT.$$
Since $\omega$ satisfies the  Dini condition 
$$\int_0 \frac{\omega(t)}{t}dt<\infty,$$
 it follows that $f_{\omega,E}\in A(\DD):= \Hol(\DD)\cap C(\overline{\DD})$,  disc algebra (see \cite{G} p.105--106) and so $E_{f_{\omega,E}}(1)=E$. On the other hand 
$$\int_\TT\frac{|d\zeta|}{1-|f_{\omega,E}(\zeta)|^2}\asymp  \int_\TT\frac{|d\zeta|}{\omega(d(\zeta,E))}\asymp \int_0{|E_t|}\frac{\omega'(t)}{\omega(t)^2}dt,$$
(see \cite[Proposition A.1 ]{EKR} for the last equality). Since   the last integral converges,  $C_\varphi$ is a  Hilbert-Schmidt operator in  $\hH $.

We have the following more precise result.

\begin{thm}\label{HardyHS}  Let $E$ be a closed subset of $\TT$ with Lebesgue measure zero.
There exists a mapping $\varphi: \DD \to \DD$, $\varphi\in A(\DD)$ such that $C_\varphi$ is a Hilbert-Schmidt operator on
$\hH $ and that $E_\varphi(1)=E$.
\end{thm}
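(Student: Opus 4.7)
My plan is to take $\varphi=f_{\omega,E}$, the outer function with $|f_{\omega,E}^*(\zeta)|=\exp(-\omega(d(\zeta,E)))$ on $\TT$, for a continuous increasing function $\omega\colon[0,\pi]\to[0,1]$ with $\omega(0)=0$ that will be chosen adaptively to the geometry of $E$. Once a suitable $\omega$ is in hand, the three items to verify follow the template that is already exhibited in the discussion immediately preceding the statement of the theorem:

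\begin{itemize}
\item[(a)] The Dini condition $\int_0 \omega(t)/t\,dt<\infty$ forces $f_{\omega,E}\in A(\DD)$ by the classical theory recalled from \cite{G}.
\item[(b)] Since $\omega(t)=0\iff t=0$ and $E$ is closed, one has $|f_{\omega,E}^*(\zeta)|=1\iff d(\zeta,E)=0\iff \zeta\in E$, i.e.\ $E_\varphi(1)=E$.
\item[(c)] The Hilbert--Schmidt condition $\int_\TT |d\zeta|/(1-|f_{\omega,E}^*|^2)<\infty$ is, by the equivalence $\int_\TT |d\zeta|/\omega(d(\zeta,E))\asymp \int_0 |E_t|\omega'(t)/\omega(t)^2\,dt$ cited from \cite{EKR} in the excerpt, the same as asking $\int_0 |E_t|\omega'/\omega^2\,dt<\infty$, or (after integration by parts, using $|E_t|/\omega(t)\to0$) $\int_0 d|E_t|/\omega(t)<\infty$.
\end{itemize}

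So the whole matter reduces to constructing an increasing continuous $\omega$ with
\[
\int_0 \frac{\omega(t)}{t}\,dt<\infty \qquad\text{and}\qquad \int_0 \frac{d|E_t|}{\omega(t)}<\infty.
\]
A first natural attempt is $\omega(t)^2\asymp |E_{2t}|$, which (since $|E_{2t}|\geq|E_t|$) automatically forces the HS integral to be at most $\int d|E_t|/\sqrt{|E_t|}=2\sqrt{|E_\pi|}<\infty$; the trouble is that such an $\omega$ need not be Dini when $|E_t|\to 0$ slowly.

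The hard part is therefore to produce a single $\omega$ balancing both integrals for an arbitrary closed $E$ with $|E|=0$. The plan is an adaptive dyadic construction: partition $(0,\pi]$ into $J_n=(2^{-n-1},2^{-n}]$, set $\alpha_n=|E_{2^{-n}}|-|E_{2^{-n-1}}|$ (so $\sum\alpha_n=|E_\pi|<\infty$ and $\alpha_n\to0$ because $|E|=0$), and define $\omega$ to equal a constant $\beta_n$ on $J_n$, with $\beta_n\downarrow 0$ to be chosen. The two conditions become $\sum\beta_n<\infty$ (Dini) and $\sum \alpha_n/\beta_n<\infty$ (HS); a Cauchy--Schwarz--type optimization, choosing $\beta_n$ comparable to $\sqrt{\alpha_n}$ reweighted along a geometric scheme that exploits the decay of $\alpha_n$, yields a sequence satisfying both bounds. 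A final smoothing step replaces the step function by a continuous increasing $\omega$ without changing the convergence of either integral, and then (a)--(c) give the theorem.
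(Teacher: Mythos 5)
Your reduction at the end is the problem: you ask for an increasing $\omega$ with both $\int_0\omega(t)/t\,dt<\infty$ and $\int_0 d|E_t|/\omega(t)<\infty$, and you assert that a ``Cauchy--Schwarz--type optimization'' of the dyadic sequence $\beta_n$ will produce one. In fact Cauchy--Schwarz proves the opposite. With your notation $\alpha_n=|E_{2^{-n}}|-|E_{2^{-n-1}}|$ and $\beta_n=\omega(2^{-n})$, monotonicity of $\omega$ gives $\int_0\omega(t)/t\,dt\gtrsim\sum_n\beta_n$ and $\int_0 d|E_t|/\omega(t)\geq\sum_n\alpha_n/\beta_n$, and then
$$\Big(\sum_n\sqrt{\alpha_n}\Big)^2=\Big(\sum_n\sqrt{\beta_n}\cdot\sqrt{\alpha_n/\beta_n}\Big)^2\leq\Big(\sum_n\beta_n\Big)\Big(\sum_n\frac{\alpha_n}{\beta_n}\Big).$$
So both conditions can hold simultaneously only if $\sum_n\sqrt{\alpha_n}<\infty$, and this fails for some closed sets of measure zero. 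For example, a generalized Cantor set with $2^n$ intervals of length $\ell_n=2^{-n}/n$ at stage $n$ has $|E|=\lim 2^n\ell_n=0$, $|E_t|\asymp 1/\log(1/t)$, and $\alpha_n\asymp 1/n^2$, whence $\sum\sqrt{\alpha_n}=\infty$. For such an $E$ no admissible $\omega$ exists, so the outer-function ansatz $\varphi=f_{\omega,E}$ with a radial profile $\omega(d(\zeta,E))$ cannot prove the theorem in the stated generality; it only recovers the weaker statement already discussed in the paper before the theorem (there under the hypothesis $|E_t|=O((\log(e/t))^{-3})$, which is exactly what makes $\sqrt{|E_t|}$-type profiles Dini).

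The paper escapes this trap by abandoning outer functions altogether. It writes $\TT\setminus E=\bigcup_n(e^{ia_n},e^{ib_n})$, puts on each complementary arc an explicit density $g$ blowing up like $\mathrm{dist}^{-1/4}$ at the endpoints (hence locally $L^2$), scaled by constants $\tau_n\to\infty$ chosen so that $g\in L^2(\TT)$, and sets $\varphi=f/(f+1)$ with $f=U+iV$, $U$ the Poisson extension of $g$. Since $U\geq 0$, one has $1-|\varphi|^2=(2U+1)/|f+1|^2\geq |f+1|^{-2}$, so $\int_\TT(1-|\varphi|^2)^{-1}\leq\int_\TT\big((U+1)^2+V^2\big)\lesssim 1+\|g\|_{L^2}^2$, while $\tau_n\to\infty$ forces $U\to+\infty$ precisely on $E$, giving $E_\varphi(1)=E$. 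The Hilbert--Schmidt bound thus costs only an $L^2$ condition on $g$, with no Dini-type constraint tied to the decay of $|E_t|$; that decoupling is the essential idea your approach is missing. If you want to salvage your scheme, you would need to replace the Dini hypothesis by a continuity criterion for the conjugate function that is genuinely weaker for these profiles, and there is no such criterion available here.
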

 \begin{proof}
The proof is based a well known  construction of peak functions in the disc algebras. Let $\TT \setminus E= \displaystyle \cup _{n\geq 1} (e^{ia_n},e^{ib_n})$.
For $t\in (a_n,b_n)$, we define
$$g(e^{it})= \tau_{n}\frac{(b_n-a_n)^{1/2}}{((b_n-a_n)^2-(2t-(b_n+a_n))^2)^{1/4}},$$
where $(\tau_{n})_n \subset (0,\infty)$ will be chosen later, and $g(e^{it}):=+\infty$ if $e^{it}\in E$.\\
Note that
$$\displaystyle \int _0^{2\pi}g(e^{it})^2 \, dt = \pi\displaystyle \sum_{n =1}^\infty \tau_{n}^2 \, (b_n-a_n).$$
Since $\displaystyle \sum _{n=1}^\infty (b_n-a_n)=2\pi$, there exists a sequence $(\tau_{n})_n$ such that
$$\displaystyle \lim _{n\to +\infty}\tau_{n} = +\infty \ \ \mbox{and}\ \ \displaystyle \sum_{n=1}^\infty \tau_{n}^2 \, (b_n-a_n)<\infty.$$
Let $U$ denote the harmonic extension of $g$ on the unit disc given by 
$$
U(re^{i\theta})= \frac{1}{2\pi}\displaystyle \int _0^{2\pi} \frac{1-r^2}{|e^{it}-re^{i\theta}|^2} \, g(e^{it}) \, dt =
\displaystyle \sum _{n\in \ZZ}\widehat{g}(n) \, r^{|n|} \, e^{in\theta}.
$$
Since $\tau_{n} \to \infty$, one can easily verify that $\displaystyle \lim_{t\to \theta}g(e^{it})=+\infty$,
for $e^{i\theta}\in E$. Hence, $\displaystyle \lim _{r\to 1^-}U(re^{i\theta})=+\infty$, for $e^{i\theta}\in E$.\\
Let $V$ be the harmonic conjugate of $U$, with $V(0)=0$.
It is given by
$$
V(re^{i\theta}) = \displaystyle \sum _{n\neq 0} \frac{n}{|n|} \, \widehat{g}(n) \, r^{|n|} \, e^{in\theta}.
$$
Now, since $g$ is a $C^1$ function on $\TT \setminus E$,
we see that the holomorphic function $f = U+iV$ is continuous on ${\overline {\DD}} \setminus E$.
Knowing that $\displaystyle \lim _{r\to 1^-}U(re^{it})=+\infty$,
for $e^{it}\in E$, we get that $\displaystyle\varphi = \frac{f}{f+1} \in A(\DD)$,  disc algebra, and $E_\varphi (1)= E$.
Finally
\begin{eqnarray*}
\frac{1}{2\pi} \int_0^{2\pi}\frac{dt}{1-|\varphi(e^{it})|^2} & = &  \frac{1}{2\pi} \int_0^{2\pi}\frac{(U(e^{it})+1)^2+V^2(e^{it})}{(U(e^{it})+1)^2-U^2(e^{it})}dt\\
& \leq & \displaystyle\frac{1}{2\pi} \int_0^{2\pi}(U(e^{it})+1)^2+V^2(e^{it})dt\\
& \leq & 1+ 2 \sum _{n\in \ZZ}| \, \widehat{g}(n)|^2,
\end{eqnarray*}
which shows that $C_{\varphi}$ is Hilbert-Schmidt because $g\in L^2(\TT)$.
\end{proof}

Let $E$ be a closed subset of the unit circle $\T$.
Fix a non-negative function $w\in C^{1}(0,\pi]$ such that
$$\int_\T w(d(\zeta,E))\, |d\zeta|<\infty,$$
where d denotes the arclength distance.
Now, let $f_{w,E}$ be the outer function given by
\begin{equation}\label{exterieure}
|f^{*}_{w,E}(\zeta)|=e^{-w(d(\zeta,E))},\qquad \text{ a.e. on }\T.
\end{equation}
The following lemma gives an estimate for the Dirichlet integral of $f_{w,E}$ in terms of $w$
and the distance function on $E$.
The proof is based on Carleson's formula, and can be achieved by slightly modifying the arguments
used in  \cite[Theorem 4.1]{EKR1}.
\begin{lem}\label{lmnorme}  Assume that the function  $\omega$ is nondecreasing and
$\omega(t^\gamma)$ is  concave for all $\gamma>2$.
Then
$$\cD(f_{w,E})\asymp  \int_{\T} \omega{'}(d(\zeta,E))^2 \,e^{-2w(d(\zeta,E))}\, d(\zeta,E)\, |d\zeta|.$$
\end{lem}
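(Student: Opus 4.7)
My plan is to use \emph{Carleson's formula} for the Dirichlet integral of an outer function: if $f$ is outer with $|f^{*}|=e^{u}$ on $\T$, then
$$\cD(f)=\frac{1}{4\pi^{2}}\iint_{\T\times\T}\frac{(e^{2u(\zeta)}-e^{2u(\eta)})(u(\zeta)-u(\eta))}{|\zeta-\eta|^{2}}\,|d\zeta|\,|d\eta|.$$
Applying this with $u(\zeta)=-w(d(\zeta,E))$ and writing $s=d(\zeta,E)$, $t=d(\eta,E)$, symmetry lets me restrict to the region $\{s\leq t\}$ (both factors change sign together, so the integrand is non\-negative). Using the elementary comparison $x\,e^{-x}\lesssim 1-e^{-x}\leq x$ for $x\ge 0$, I would then replace $e^{-2w(s)}-e^{-2w(t)}$ by $e^{-2w(s)}(w(t)-w(s))$ up to universal constants (modulo a harmless $e^{-(w(t)-w(s))}$ factor which is bounded on the dominant regime of integration). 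The problem is therefore reduced to estimating
$$I:=\iint_{\{s\leq t\}}\frac{(w(t)-w(s))^{2}}{|\zeta-\eta|^{2}}\,e^{-2w(s)}\,|d\zeta|\,|d\eta|.$$

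Next I would fix $\zeta$ with $d(\zeta,E)=s$ and carry out the $\eta$-integration. The geometric ingredient is the one-Lipschitz bound $|d(\zeta,E)-d(\eta,E)|\leq|\zeta-\eta|$ together with the fact that the boundary arclength of $\{\eta\in\T:|\zeta-\eta|\leq\rho\}$ is $\asymp\rho$. I split the integration at $|\zeta-\eta|\asymp s$. In the \emph{near} region $|\zeta-\eta|\lesssim s$, the concavity of $\omega(t^{\gamma})$ for some $\gamma>2$ gives the comparison $w(t)-w(s)\asymp \omega'(s)(t-s)$, and since $|t-s|\leq|\zeta-\eta|$, the integrand is controlled by $\omega'(s)^{2}$; integrating in $\eta$ yields
$$\int_{|\zeta-\eta|\lesssim s}\frac{(w(t)-w(s))^{2}}{|\zeta-\eta|^{2}}\,|d\eta|\asymp \omega'(s)^{2}\,s,$$
which matches the integrand on the right-hand side of the lemma.

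The main obstacle will be the \emph{far} region $|\zeta-\eta|\gtrsim s$, where the crude bound $w(t)-w(s)\leq w(\pi)$ is too weak. Here the hypothesis that $\omega(t^{\gamma})$ is concave for all $\gamma>2$ is used to obtain a doubling-type estimate showing that $t\mapsto\omega(t)/t^{\alpha}$ is essentially decreasing for a suitable small $\alpha>0$; this transforms the tail into a convergent integral dominated by $\omega'(s)^{2}s$, thereby matching the near-region contribution. For the matching lower bound I would simply restrict Carleson's double integral to the strip $s\leq t\leq 2s$ and $|\zeta-\eta|\asymp s$: on this set the factor $1-e^{-2(w(t)-w(s))}$ is bounded below by a constant times $\omega'(s)(t-s)$, and integration in $\eta$ produces exactly $\omega'(s)^{2}s\,e^{-2w(s)}$. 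Integrating in $\zeta$ along $\T$ then yields the claimed equivalence, following the same strategy as \cite[Theorem~4.1]{EKR1} with the only change being the substitution of the arclength distance to the set $E$ for the single-point distance used there.
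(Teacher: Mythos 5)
Your overall route---Carleson's formula for outer functions, the one--Lipschitz property of $d(\cdot,E)$, and a near/far splitting of the $\eta$--integral at $|\zeta-\eta|\asymp d(\zeta,E)$---is exactly the one the paper intends (it proves the lemma by invoking Carleson's formula and the argument of \cite[Theorem 4.1]{EKR1}). Two of your steps, however, do not close as written.

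First, the far region. You propose to use the hypothesis only through the statement that $t\mapsto\omega(t)/t^{\alpha}$ is essentially decreasing, and to conclude that the tail is dominated by $\omega'(s)^{2}s$. That implication fails: the bound $\omega(t)-\omega(s)\leq\bigl((t/s)^{\alpha}-1\bigr)\omega(s)$ turns the tail integral into a quantity of order $\omega(s)^{2}/s$, and $\omega(s)^{2}/s\lesssim\omega'(s)^{2}s$ would require $\omega(s)\lesssim s\,\omega'(s)$, which is the \emph{reverse} of what concavity gives and is false for $\omega(t)=(\log(e/t))^{-2}$, the very gauge used in Section 3, where $s\omega'(s)/\omega(s)\asymp(\log(e/s))^{-1}\to0$. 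The hypothesis must be exploited at the level of $\omega'$: concavity of $x\mapsto\omega(x^{\gamma})$ means that $t^{1-1/\gamma}\omega'(t)$ is nonincreasing, whence for $T\geq s$
$$\omega(T)-\omega(s)=\int_{s}^{T}\bigl[u^{1-1/\gamma}\omega'(u)\bigr]\,u^{1/\gamma-1}\,du\leq\gamma\,s^{1-1/\gamma}\,\omega'(s)\,T^{1/\gamma},$$
and then, since $t\leq s+|\zeta-\eta|\leq 2|\zeta-\eta|$ on the far region,
$$\int_{s}^{\pi}\frac{\bigl(\omega(2\rho)-\omega(s)\bigr)^{2}}{\rho^{2}}\,d\rho\;\lesssim\;s^{2-2/\gamma}\,\omega'(s)^{2}\int_{s}^{\infty}\rho^{2/\gamma-2}\,d\rho\;\asymp\;\omega'(s)^{2}\,s,$$
the convergence at infinity being precisely where $\gamma>2$ enters. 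This is the estimate you need, and it is not a cosmetic variant of the one you stated.

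Second, the lower bound. Restricting to $s\leq t\leq2s$ and $|\zeta-\eta|\asymp s$ is problematic for two reasons: there may be no such $\eta$ at all (for instance when $\zeta$ is a local maximum of $d(\cdot,E)$), and the inequality $\omega(t)-\omega(s)\gtrsim\omega'(s)(t-s)$ for $t>s$ requires a reverse doubling bound $\omega'(2s)\gtrsim\omega'(s)$ that the hypotheses do not provide (concavity yields only the opposite inequality). Both difficulties disappear if you instead integrate over the arc of $\eta$ lying between $\zeta$ and its nearest point of $E$ with $d(\zeta,\eta)\in[s/4,s/2]$: this arc always has length $\asymp s$, on it $t\leq 3s/4$ so that $s-t\gtrsim s$, and concavity gives $\omega(s)-\omega(t)\geq\omega'(s)(s-t)$ with the derivative evaluated at the \emph{larger} endpoint, which is the direction actually available. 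With these two repairs your argument does prove the lemma.
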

Since the  sequence  $\{{z^n}/\sqrt{n+1}\}_{n=0}^\infty$ is an orthonormal basis  of $\cD$,
the operator $C_\varphi$ is Hilbert-Schmidt on the Dirichlet space if and only if
$$
\frac{1}{\pi}\int_\DD\frac{|\varphi'(z)|^2}{(1-|\varphi(z)|^2)^2}\, dA(z)=
\sum_{n\geq 1} \frac{\cD(\varphi^n)}{n} \,<\, \infty.
$$

\begin{thm}\label{thnorme}   Assume that the function  $\omega$ is nondecreasing and
$\omega(t^\gamma)$ is  concave for some $\gamma>2$.
Then  $C_{f_{w,E}}$ is in the Hilbert-Schmidt class in $\cD$ if and only if
$$\displaystyle \int_{\T} \frac{\omega{'}(d(\zeta,E))^2}{w(d(\zeta,E))^2}\, d(\zeta,E)\, |d\zeta| \,<\, \infty.$$
\end{thm}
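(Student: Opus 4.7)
My plan is to combine the Hilbert--Schmidt criterion $\sum_{n\geq 1}\cD(\varphi^n)/n<\infty$ recorded just before the theorem with the Dirichlet integral estimate of Lemma \ref{lmnorme}, applied to each iterate $\varphi^n$, and then sum the resulting series in closed form to extract the factor $1/\omega^2$.

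Since $\varphi=f_{\omega,E}$ is outer with boundary modulus $e^{-\omega(d(\zeta,E))}$, the $n$-th power $\varphi^n$ is the outer function $f_{n\omega,E}$. The hypotheses on $\omega$ (monotonicity and concavity of $\omega(t^\gamma)$) are invariant under multiplication by the positive constant $n$, so Lemma \ref{lmnorme} applies with weight $n\omega$ and yields, with constants independent of $n$,
$$\cD(\varphi^n)\asymp n^2\int_{\T}\omega'(d(\zeta,E))^2\,e^{-2n\omega(d(\zeta,E))}\,d(\zeta,E)\,|d\zeta|.$$
An application of Tonelli and the elementary identity $\sum_{n\geq 1}n x^n=x/(1-x)^2$ with $x=e^{-2\omega(d(\zeta,E))}$ then gives
$$\sum_{n\geq 1}\frac{\cD(\varphi^n)}{n}\asymp\int_{\T}\omega'(d(\zeta,E))^2\,d(\zeta,E)\,\frac{e^{-2\omega(d(\zeta,E))}}{(1-e^{-2\omega(d(\zeta,E))})^2}\,|d\zeta|.$$

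It remains to replace the last factor by $1/\omega(d(\zeta,E))^2$, which rests on the pointwise comparison
$$\frac{e^{-2u}}{(1-e^{-2u})^2}\asymp\frac{1}{u^2}\qquad (u\in(0,M])$$
coming from the Taylor expansion $1-e^{-2u}=2u+O(u^2)$. Substituting $u=\omega(d(\zeta,E))$ converts the previous display into the right-hand side of the theorem.

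I expect two technical points to require care. The first is uniformity in $n$ of the constants in Lemma \ref{lmnorme}: one has to check (from the proof in \cite{EKR1} based on Carleson's formula) that those constants depend only on $\gamma$ and not on $\omega$ itself, so that they survive summation over $n$. The second is that the comparison $e^{-2u}/(1-e^{-2u})^2\asymp 1/u^2$ requires $\omega(d(\zeta,E))$ to be bounded on $\T$; if $\omega$ blows up at $0$, I would split the integral into the region where, say, $\omega(d(\zeta,E))\leq 1$ (where the comparison is clean) and its complement, where the $e^{-2n\omega}$ factor decays so rapidly that both sides of the claimed equivalence are automatically controlled by the bounded region.
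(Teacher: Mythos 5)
Your proposal is correct and follows essentially the same route as the paper's own proof: identify $f_{\omega,E}^n=f_{n\omega,E}$, apply Lemma \ref{lmnorme} to each power, interchange sum and integral to evaluate $\sum_n n e^{-2n\omega}$, and conclude via $1-e^{-2\omega}\asymp\omega$. The two technical points you flag (uniformity in $n$ of the constants in Lemma \ref{lmnorme} and boundedness of $\omega$ in the comparison $e^{-2u}/(1-e^{-2u})^2\asymp u^{-2}$) are indeed left implicit in the paper, and your treatment of them is appropriate.
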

\begin{proof} 
We first note that $f_{w,E}^n=f_{nw,E}$. Therefore, by Lemma \ref{lmnorme}, we have
\begin{eqnarray*}
\int_\DD\frac{|f_{w,E}'(z)|^2}{(1-|f_{w,E}(z)|^2)^2}\, dA(z)&=&\sum_{n=1}^\infty \frac{\cD(f_{nw,E})}{n}\\
& \asymp &  \int_{\T} \omega{'}(d(\zeta,E))^2\, d(\zeta,E)\, \sum_{n=1}^\infty n e^{-2nw(d(\zeta,E))}\, |d\zeta|\\
& \asymp &  \int_{\T} \frac{\omega{'}(d(\zeta,E))^2}{[1-e^{-2w(d(\zeta,E))}]^2} \, d(\zeta,E) \, |d\zeta|.
\end{eqnarray*}
Since $1-e^{-2w(d(\zeta,E))}\asymp w(d(\zeta,E))$, the result fllows.
\end{proof}

Given a (Borel) probability measure $\mu$ on $\TT$, we define its $\alpha$-energy, $0\leq \alpha < 1$, by
$$I_\alpha(\mu)=\sum_{n=1}^\infty \frac{|\widehat{\mu}(n)|^2}{n^{1-\alpha}}.$$
For a closed set $E \subset \TT$, its $\alpha$-capacity $\caap_\alpha (E)$ is defined by
$$\caap_\alpha (E):= 1/\inf\{I_\alpha(\mu) \text{ : } \mu \text{ is a probability measure on } E \}.$$
If $\alpha=0$, we simply note $\caap(E)$ and this means the logarithmic capacity of $E$.

The weak--type inequality for capacity \cite{C} states that, for $f\in \cD$ and $t\geq 4\|f\|^{2}_{\cD}$,
$$\caap(\{\zeta \text{ : } |f(\zeta)|\geq t\})\leq \frac{16\|f\|^{2}_{\cD}}{t^2}.$$
As a result of this inequality, we see that if $\liminf\|\varphi^n\|_\cD=0$, then  $\caap (E_\varphi(1))=0$.
Indeed, since   $E_\varphi(1)=E_{\varphi^n}(1)$,  the weak capacity inequality  implies that
$$\caap(E_\varphi(1))= \caap (E_{\varphi^n}(1))\leq {16}\|\varphi^n\|_\cD^2.$$
Now let  $n \to \infty$.
Hence, in particular, if the  operator $C_\varphi$ is in the
Hilbert-Schmidt class in $\cD$, then $\caap (E_\varphi(1))=0$.
This result was first obtained by Gallardo--Gonz\'alez \cite{GG, GG2} using a completely different method.
Theorems \ref{quan} and \ref{rec} give
quantitative versions of this result.
\begin{thm}\label{quan}
If $C_\varphi$ is a  Hilbert-Schmidt operator in $\cD$, then
\begin{equation}\label{if}
\int_{0}^{1}\frac{\caap (E_\varphi(s))}{1-s}\log \frac{1}{1-s} \,ds<\infty.
\end{equation}
\end{thm}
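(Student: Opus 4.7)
The approach is to translate the Hilbert--Schmidt hypothesis, equivalent to $\sum_{n\geq 1}\|\varphi^n\|_\cD^2/n<\infty$, into a quantitative decay of $\caap(E_\varphi(s))$ as $s\to 1$, by exploiting the identity $E_\varphi(s)=E_{\varphi^n}(s^n)$, which turns one capacity estimate into an infinite family indexed by $n$.

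The first step is to apply the weak-type capacitary inequality (stated just before the theorem) to $f=\varphi^n$ with threshold $t=s^n$, obtaining
\[
\caap(E_\varphi(s))\leq \frac{16\|\varphi^n\|_\cD^2}{s^{2n}},\qquad n\geq 1,
\]
valid once $s^n\geq 4\|\varphi^n\|_\cD^2$, which is automatic for $s$ close to $1$ since $\|\varphi^n\|_\cD\to 0$ under HS. For each fixed $s\in(0,1)$ I would make the optimal choice $n=n(s):=\lfloor 1/(1-s)\rfloor$, so that $s^{2n(s)}\geq c_0>0$ (using $(1-1/n)^{2n}\to e^{-2}$) and hence $\caap(E_\varphi(s))\leq C\|\varphi^{n(s)}\|_\cD^2$. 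Partitioning $(0,1)$ into the intervals $J_n=[1-1/n,\,1-1/(n+1))$ where $n(s)\equiv n$, and computing $\int_{J_n}\log(1/(1-s))/(1-s)\,ds$ via the antiderivative $\tfrac12(\log(1/(1-s)))^2$, reduces the target to a series $\sum_n\omega_n\|\varphi^n\|_\cD^2$ with explicit weights $\omega_n$ coming from the interval integrals.

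The main obstacle is that a direct interval-by-interval computation yields $\omega_n\asymp \log n/n$, so the bare bound gives $\sum_n \|\varphi^n\|_\cD^2\log n/n$, strictly larger than the HS sum and not a priori controlled by it. To absorb the extra logarithmic factor, I would replace the pointwise weak-type bound by the capacitary strong-type (Maz'ya) inequality $\int_0^1 2t\,\caap(E_{\varphi^n}(t))\,dt\leq C\|\varphi^n\|_\cD^2$; via $t=s^n$ this becomes $\int_0^1 s^{2n-1}\caap(E_\varphi(s))\,ds\leq C\|\varphi^n\|_\cD^2/n$. Summing with weights $\alpha_n$ and applying Fubini yields
\[
\int_0^1 \caap(E_\varphi(s))\Bigl(\sum_n\alpha_n s^{2n-1}\Bigr)ds \leq C\sum_n \frac{\alpha_n\|\varphi^n\|_\cD^2}{n}.
\]
The technical heart of the proof is then to choose $\alpha_n$ so that $\sum_n\alpha_n s^{2n-1}$ matches the target kernel $\log(1/(1-s))/(1-s)$ near $s=1$ while $\sum_n\alpha_n\|\varphi^n\|_\cD^2/n$ remains inside the HS bound. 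This balance, combined with the monotonicity of $\cD(\varphi^n)/n^2$ (which is clear from $\cD(\varphi^n)=n^2\int_\DD|\varphi|^{2n-2}|\varphi'|^2\,dA$), is the subtle point that makes the logarithmic factor compatible with the summability provided by the Hilbert--Schmidt assumption.
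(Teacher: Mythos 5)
Your reduction to the powers $\varphi^n$ cannot produce the logarithmic factor in \eqref{if}; the step you flag as ``the technical heart'' is not just subtle but unbridgeable by the means you propose. After the substitution $t=s^n$, your strong-type inequality reads $\int_0^1 s^{2n-1}\caap(E_\varphi(s))\,ds\leq C\,\cD(\varphi^n)/n$, and to make $\sum_n\alpha_n s^{2n-1}$ comparable to $(1-s)^{-1}\log\frac{1}{1-s}$ near $s=1$ you need $\sum_{n\leq N}\alpha_n\gtrsim N\log N$, i.e.\ $\alpha_n\asymp\log n$ on average. The right-hand side of your summed inequality then becomes $\sum_n \log n\cdot\cD(\varphi^n)/n$, which is strictly stronger than the Hilbert--Schmidt condition $\sum_n\cD(\varphi^n)/n<\infty$. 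The monotonicity of $a_n:=\cD(\varphi^n)/n^2$ does not close this gap: the profile $a_n=1/(n^2(\log n)^2)$ is decreasing with $\sum_n na_n<\infty$, yet $\sum_n na_n\log n=\infty$. (With $\alpha_n\equiv 1$ your scheme does give $\sum_n s^{2n-1}=s/(1-s^2)$ and hence the weaker bound $\int_0^1\caap(E_\varphi(s))(1-s)^{-1}\,ds<\infty$, but that is not the stated theorem.)

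The paper gets the logarithm by a different device: it applies the strong capacitary inequality of Wu not to the powers $\varphi^n$ but to the single auxiliary function $\varphi_\lambda(\zeta)=\log\Re\frac{1+\lambda\varphi(\zeta)}{1-\lambda\varphi(\zeta)}$ with $\lambda=\pm 1$, whose harmonic Dirichlet norm is controlled by the Hilbert--Schmidt integral $\int_\DD|\varphi'|^2(1-|\varphi|^2)^{-2}\,dA$. On $\Delta_\lambda=\{\zeta:|1-\lambda\varphi(\zeta)|\geq1\}$ one has $|\varphi_\lambda(\zeta)|\asymp\log\frac{1}{1-|\varphi(\zeta)|^2}$, so the strong-type bound $\int^\infty\caap(\{|\varphi_\lambda|>s\})\,ds^2<\infty$ becomes, after the change of variable $s\asymp\log\frac{1}{1-u}$, precisely $\int^1\caap(E_\varphi(u)\cap\Delta_\lambda)\,d\big(\log\frac{1}{1-u}\big)^2<\infty$; it is the square in $ds^2$ acting on the logarithm that generates the kernel $\log\frac{1}{1-u}\cdot\frac{du}{1-u}$. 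Covering $\TT=\Delta_1\cup\Delta_{-1}$ and using subadditivity of capacity then finishes the proof. To salvage your plan you would have to build the logarithm into the test function before invoking the capacitary inequality, rather than trying to recover it afterwards by weighting the family of inequalities for $\varphi^n$.
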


\begin{proof}
Fix  $\lambda\in \TT$ and let
$$\varphi_\lambda(\zeta)=\log \,\Re \, \frac{1+\lambda\varphi(\zeta)}{1-\lambda\varphi(\zeta)},\qquad \zeta\in \T.$$
Since $$\int_\D\frac{|\varphi'(z)|^2}{(1-|\varphi(z)^2|)^2}\, dA(z)<\infty,$$
it follows that $\varphi_\lambda\in \cD(\TT)$, see \cite{GG}, where
$$\cD(\T):=\{f\in L^2(\T)\text{ : }\|f\|^{2}_{\cD(\TT)}=\sum_{n\in\ZZ}|\widehat{f}(n)|^2 (1+|n|)<\infty\}.$$
Setting   $\Delta_\lambda:=\{\zeta\in \TT\text{ : } |1-\lambda\varphi(\zeta)|\geq 1\}$, we see that
$$|\varphi_\lambda(\zeta)|\asymp \log\frac{1}{1-|\varphi(\zeta)|^2},\qquad \forall\zeta\in \Delta_\lambda.$$
Applying the strong capacity inequality \cite[Theorem 2.2]{W} to $\varphi_\lambda$, we get
\begin{eqnarray*}
\infty>\|\varphi_\lambda\|^{2}_{\cD(\T)}&\geq &c\int^{\infty}\text{ cap\;}\{\zeta\in \TT\text{ : } |\varphi_\lambda(\zeta)|>s\}\, ds^2\\
&=&c
\int^{\infty}\text{ cap\;}\Big\{\zeta\in\T\text{ : } \Big|\log\frac{1-|\varphi(\zeta)|^2}{|1-\lambda\varphi(\zeta)|^2}\Big|>s\Big\} \, ds^2\\
&\geq&c \int^{\infty}\text{ cap\;}\Big\{\zeta\in\T\cap\Delta_\lambda\text{ : }\Big|\log\frac{1-|\varphi(\zeta)|^2}{|1-\lambda\varphi(\zeta)|^2}\Big|>s\Big\} \, ds^2\\
&\geq&c \int^{\infty}\text{ cap\;}\Big\{\zeta\in\T\cap\Delta_\lambda\text{ : } \log\frac{1}{1-|\varphi(\zeta)|^2}>4s\Big\}\, ds^2\\
&\geq &c_1\int^{1}\text{ cap\;}\Big\{\zeta\in \T\cap\Delta_\lambda\text{ : }  |\varphi(\zeta)|> u\Big\}\, d\big(\log \frac{1}{1-u}\big)^2.
\end{eqnarray*}
Since  $\TT=\Delta_1\cup\Delta_{-1}$, the subadditivity of the capacity implies that
\begin{multline*}
\infty>\|\varphi_1\|^{2}_{\cD(\T)}+ \|\varphi_{-1}\|^{2}_{\cD(\T)}\geq \\c_2\int^{1}\text{ cap\;}\Big\{\zeta\in\TT\text{ : }  |\varphi(\zeta)|> u\Big\}\, d\big(\log \frac{1}{1-u}\big)^2,
\end{multline*}
 and hence the theorem follows.
\end{proof}

\begin{rem} 
\end{rem}

 Since $\{z^n/(1+n)^{\frac{1-\alpha}{2}}\}_{n=0}^{\infty}$ is an orthonormal basis  in $\cD_\alpha$, $\alpha\in (0,1)$, $C_\varphi$ is a  Hilbert-Schmidt operator in $\cD_\alpha$ if and only if
$$\sum_ {n=1}^\infty \frac{\cD_\alpha(\varphi^n)}{n^{1-\alpha}}\asymp \int_\DD\frac{|\varphi'(z)|^2}{(1-|\varphi(z)|^2)^{2+\alpha}} \, dA_\alpha(z)<\infty.$$
Therefore,   for fixed  $\lambda\in \TT$, the  function
$$\varphi_\lambda(\zeta)= \Big(\Re \frac{1+\lambda\varphi(\zeta)}{1-\lambda\varphi(\zeta)}\Big)^{-\alpha/2},\qquad (\zeta\in \T),$$
belongs to the weighted harmonic Dirichlet space
$$\cD_\alpha(\TT):=\{f\in L^2(\T)\text{ : }\|f\|^{2}_{\cD_\alpha(\TT)}=\sum_{n\in\ZZ}|\widehat{f}(n)|^2 (1+|n|)^{1-\alpha}<\infty\},$$
 (see \cite{GG2}) . Applying again the strong capacity inequality \cite[Theorem 2.2]{W} for $\cD_\alpha$  to $\varphi_\lambda$, we get as before
$$
\int_{0}^{1}\frac{\caap_\alpha (E_\varphi(s))}{(1-s)^{1+\alpha}}\,ds<\infty.
$$

The following theorem   is the analogue of  Proposition \ref{HardyHS} for the Dirichlet space.  
It shows that  condition \eqref{if} is optimal.
\begin{thm}\label{rec}
Let $h : [1,+\infty[\to [1,+\infty[$ be a function such that $\displaystyle \lim _{x\to \infty}h(x)=+\infty$. 
Let $E$ be a closed subset of $\TT$ such that $\caap  (E)= 0$. 
Then there is $\varphi\in A(\DD)\cap \cD$,  $\varphi( \DD )\subset\DD$ such that :
\begin{enumerate}
\item $E_{\varphi}(1)= E$;
\item   $C_{\varphi}$ is in the Hilbert--Schmidt class in $\cD$;
\item $
\displaystyle \int ^1\frac{\caap(E_{\varphi}(s))}{1-s} \log\frac{1}{1-s}h\Big(\frac{1}{1-s}\Big)\, ds =+\infty.
$
\end{enumerate}
\end{thm}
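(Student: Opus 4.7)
The plan is to take $\varphi = f_{w,E}$, the outer function with boundary modulus $|f^{*}_{w,E}(\zeta)| = e^{-w(d(\zeta,E))}$, for a weight $w\colon(0,\pi]\to(0,\infty)$ chosen to depend on the given $h$. Item (1) holds immediately, since $w(0^{+})=0$ forces $|f^{*}_{w,E}|=1$ precisely on $E$. For item (2), Theorem \ref{thnorme} reduces Hilbert--Schmidt membership to the convergence of
\[
I(w) := \int_{\TT} (w'/w)^{2}(d(\zeta,E))\, d(\zeta,E)\, |d\zeta| = \int_{0}^{\pi}(w'/w)^{2}(t)\, t\, d|E_{t}|.
\]
For item (3), since $E_{\varphi}(s) = E_{w^{-1}(\log(1/s))}$ and $E$ contains at least one point (so $E_{t}$ contains an arc of length $2t$), I get the universal lower bound $\caap(E_{\varphi}(s))\gtrsim 1/\log(1/w^{-1}(\log(1/s)))$.

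Passing to the variable $u=\log(1/t)$ and writing $H(u):=\log(1/w(e^{-u}))$, the identity $w'(t)/w(t)=H'(u)/t$ translates the three requirements on $w$ into: (A) the Dini condition $\int^{\infty}e^{-H(u)}\, du<\infty$ (so $\varphi\in A(\DD)$); (B) the HS baseline $\int^{\infty}H'(u)^{2}\, du<\infty$ (appropriate when $E$ is a single point; for general $E$ the co-area factor must be folded in, see below); (C) divergence in (3), which after the substitution $v=H(u)$ is implied by $\int^{\infty}(H(u)/u)\, h(e^{H(u)})\, H'(u)\, du=\infty$.

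The main construction is an $h$-adapted staircase for $H$. Assuming (WLOG) $h$ is nondecreasing, choose $v_{n}\to\infty$ with $v_{n}\geq 2v_{n-1}$ and $h(e^{v_{n-1}})\geq 2^{n-1}$, which is possible because $h\to\infty$. Define $H$ on the transition $[v_{n-1}^{2}\, 2^{n}, v_{n}^{2}\, 2^{n}]$ by $H(u)=\sqrt{u/2^{n}}$ (running from $v_{n-1}$ to $v_{n}$), and on the plateau $[v_{n}^{2}\, 2^{n}, v_{n}^{2}\, 2^{n+1}]$ by $H(u)=v_{n}$. The three conditions follow from direct estimates: (A) becomes $\asymp\sum v_{n}^{2}\, 2^{n}\, e^{-v_{n}}$, summable once $v_{n}$ grows at least like $n\log 2$ (easily arranged); (B), using $H'\equiv 0$ on plateaus, reduces to $\sum\log(v_{n}/v_{n-1})/2^{n+1}\lesssim\sum 1/2^{n+1}<\infty$; (C), on the transition $n$, the substitution $v=H$ gives $\int_{v_{n-1}}^{v_{n}}h(e^{v})/(v\, 2^{n})\, dv\geq(\log 2)/2$, so the sum diverges. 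A mild smoothing of $H$ at the corners ensures $w$ is $C^{1}$ and $w(t^{\gamma})$ is concave for some $\gamma>2$, so Lemma \ref{lmnorme} and Theorem \ref{thnorme} apply.

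The main obstacle is verifying (2) for \emph{general} $E$ with $\caap(E)=0$: the co-area density $d|E_{t}|/dt$ appearing in $I(w)$ depends on the geometric complexity of $E$, and although $\caap(E)=0$ implies $|E|=0$, this density is not pointwise bounded in general. The saving point is that $H'\equiv 0$ on the plateaus, so only the very sparse transition intervals contribute to $I(w)$; by a preliminary adjustment of the sequence $v_{n}$ (retaining the key properties $h(e^{v_{n-1}})\geq 2^{n-1}$ and $v_{n}\geq 2v_{n-1}$) so that the transitions fall at $t$-scales where $\int_{\text{trans}_{n}}d|E_{t}|$ is under control, one can keep $I(w)$ finite. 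This final matching of the staircase to the geometry of $E$ is the delicate part of the proof.
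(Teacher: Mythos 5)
Your approach is genuinely different from the paper's, and it has a fatal gap --- precisely the one you flag at the end as ``delicate.'' It is not delicate but impossible in general. Write $N(t)$ for the number of complementary arcs of $E$ of arclength greater than $2t$; since $|E|=0$ one has $|E_t|=\sum_n\min(L_n,2t)$ and hence $\frac{d}{dt}|E_t|=2N(t)$ a.e., so your quantity is $I(w)=\int_0^\pi (w'(t)/w(t))^2\,2tN(t)\,dt$. Put $G(t)=\log(1/w(t))$; the requirement $E_\varphi(1)=E$ forces $w(0^+)=0$, i.e. $G(a)\to+\infty$ as $a\to0^+$. Fix $t_0$ with $N(t_0)\geq1$. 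By Cauchy--Schwarz,
$$\bigl(G(a)-G(t_0)\bigr)^2=\Bigl(\int_a^{t_0}\frac{w'(t)}{w(t)}\,dt\Bigr)^2\leq I(w)\int_a^{t_0}\frac{dt}{2tN(t)},$$
so $I(w)=+\infty$ for \emph{every} admissible weight $w$ as soon as $\int_0 dt/(tN(t))<\infty$. Writing $L_1\geq L_2\geq\cdots$ for the gap lengths, $\int_0 dt/(tN(t))=\sum_k k^{-1}\log(L_k/L_{k+1})\asymp\sum_k k^{-2}\log(1/L_k)$, which converges for instance when $L_k\asymp k^{-2}$, e.g. for $E=\{e^{i/k}:k\geq1\}\cup\{1\}$ --- a countable closed set, hence of logarithmic capacity zero. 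For such $E$ no choice of the transition scales $v_n$, indeed no radial weight $w(d(\zeta,E))$ whatsoever, keeps $I(w)$ finite, so item (2) is unattainable along your route. (A secondary problem: a staircase $H$ with genuine plateaus produces a $w$ for which $w(t^\gamma)$ cannot be concave for any $\gamma$, since the slope of $w$ vanishes on a plateau and becomes positive again at smaller scales' transitions read in the increasing-$t$ direction; no ``mild smoothing'' restores concavity while keeping $H'\equiv0$ on the plateaus, which is exactly what your estimate of (B) uses, so Lemma \ref{lmnorme} and Theorem \ref{thnorme} do not apply to your $w$.)

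The paper avoids this obstruction by not prescribing $|\varphi^*|$ as a function of $d(\zeta,E)$ at all. It sets $k(x)=h(e^x)$, chooses a decreasing $\psi$ with $\int^\infty\psi\,dx^2<\infty$ and $\int^\infty\psi k\,dx^2=\infty$, and puts $\eta(t)=\psi^{-1}(\caap(E_t))$, so that $\int_0\caap(E_t)\,|d\eta^2(t)|<\infty$. A theorem of El-Fallah--Kellay--Ransford then supplies $f\in\cD$ with $\Re f(\zeta)\geq\eta(d(\zeta,E))$ and $|\Im f|<\pi/4$ quasi-everywhere (a one-sided boundary condition only), and one takes $\varphi=\exp(-e^{-f})$. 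The Hilbert--Schmidt property follows from the pointwise bound $|\varphi'|^2(1-|\varphi|^2)^{-2}\lesssim|f'|^2$, i.e. from $\cD(f)<\infty$, with no co-area analysis of $E$; and the divergence in (3) comes from $E_\varphi(s)\supseteq\{\zeta:\eta(d(\zeta,E))\geq\log\frac{1}{1-s}\}$ together with the true lower bound $\caap(E_t)$ rather than your single-arc bound. If you want to salvage your scheme, you would have to replace the radial weight by something calibrated to $\caap(E_t)$ in this way; as written, the proposal does not prove the theorem.
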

\begin{proof}
Let $k(x) = h(e^x)$, there exists a  continuous decreasing function $\psi$ such that
$$
\displaystyle \int ^{+\infty}\psi(x)\, dx^2<\infty\ \ \mbox{ and}\ \ \displaystyle \int ^{+\infty}\psi(x)\,k(x)\,dx^2=\infty.
$$
Set $\eta (t)= \psi ^{-1}(\caap  (E_t))$. We have
\begin{eqnarray*}
 \int _0 \caap (E_t)\, |d\eta ^2(t)|&\asymp & \int _0 \psi (\eta (t))\, |d\eta ^2(t)|\\
 &\asymp &\int ^{+\infty} \psi (x)\, dx^2<\infty,
\end{eqnarray*}
and,
\begin{eqnarray*}
 \int _0 \caap (E_t)\, h(e^{\eta (t)}))\, |d\eta ^2(t)|&\asymp& \int _0 \psi (\eta (t))\, k(\eta (t))\, |d\eta ^2(t)|\\
 &\asymp&\int ^{+\infty} \psi(x)\, k(x)\, dx^2=\infty.\\
\end{eqnarray*}
Since
$$
\displaystyle \int _0 \caap (E_t)\, |d\eta ^2(t)|<\infty,
$$
by \cite[Theorem 5]{EKR}, there exists a function $f\in {\cal D}$ such that
$$
 \Re f (\zeta) \geq \eta (d(\zeta , E)) \quad \text{ and } \quad |\Im f (\zeta)|<\pi /4, \qquad  \text{q.e. on \;\;}  \TT.
$$
By  harmonicity,
$$ |\Im f (z)|<\pi /4\,,\qquad   |z|<1,$$
 Now take
$$\varphi = \exp (-e^{-f}).$$
By a simple modification in the construction of $f$ as in \cite{BC}, we can suppose that  $\varphi\in A(\DD)$. Hence  $E_{\varphi}(1)= E$ and
\begin{eqnarray*}
\int_\DD \frac{|\varphi'(z)|^2}{(1-|\varphi(z)|^2)^2}
& \asymp &\int_\DD \frac{|f'(z)|^2 \, e^{-2\,\Re f(z)} \,
e^{-2\,e^{\,-\Re f(z)}\,\cos (\Im f(z))}}{e^{-2 \, \Re f(z)} \, \cos^2 ( \Im f(z)) }\, dA(z)\\
& \leq &\int_\DD {|f'(z)|^2 \, \exp(-\sqrt{2}\, e^{- \Re f(z)})}\, dA(z)\\
& \leq &  c\int_\DD |f'(z)|^2\, dA(z) \,<\, \infty.
\end{eqnarray*}
Hence  $C_{\varphi}$ is in the Hilbert--Schmidt class.
Finally, since
$$
E_{\varphi}(s)\supseteq \{\zeta\in \TT\text{ : }\eta (d(\zeta,E))\geq \log (1/1-s)\},
$$
we get
$$
 \int_0 \caap (E_{\varphi}(s))h(1/1-s)\, d\big(\log (1/1-s)\big)^2 \geq
  \int_0 \caap (E_t)h(e^{\eta (t)})\, |d\eta ^2(t)|=+\infty.
$$
\end{proof}


\begin{thebibliography}{99}

\bibitem{BC}L.  Brown,  W. Cohn, Some examples of cyclic vectors in Dirichlet space. Proc. Amer. Math. Soc.,  95  (1985),  no. 1, 42--46.

\bibitem{C}
L. Carleson,  Selected Problems on Exceptional Sets. Van Nostrand, Princeton NJ, 1967.

\bibitem{CM} C.C. Cowen, B.D. MacCluer, Composition Operators on Spaces of Analytic Functions. CRC Press, Boca Raton, FL, 1995.

\bibitem{EKR}
O. El-Fallah, K. Kellay, T. Ransford. Cyclicity in the Dirichlet space.
{\it Ark. Mat.}, {\bf 44} (2006), 61--86.

\bibitem{EKR1}
O. El-Fallah, K. Kellay, T. Ransford,
 On the Brown--Shields conjecture for cyclicity in the Dirichlet space. Adv. Math, 222 (2009), no. 6, 2196--2214.

\bibitem{GG} A. Gallardo-Guti\'errez, M. J. Gonz\'alez, Exceptional sets and Hilbert-Schmidt composition
operators. J. Functional Analysis, 199 (2003), 287-300.

\bibitem{GG2} A. Gallardo-Guti\'errez, M. J. Gonz\'alez, Hilbert-Schmidt composition operators on Dirichlet spaces. Contemporary Math., 321 (2003), 87--90.

\bibitem{GG1} A. Gallardo-Guti\'errez, M. J. Gonz\'alez, Hausdorff measures, capacities and compact
composition operators. Math. Z., 253 (2006), 63--74.

\bibitem{G} J. Garnett, Bounded analytic functions, Academic Press, New York, 1981.
 \bibitem{S} J.H. Shapiro, Composition operators and classical function theory. Universitext, Tracts in Mathematics, Springer--Verlag, New York (1993).

\bibitem{M} A. Montes-Rodriguez, The essential norm of a composition operator on Bloch spaces.  Pacific J. Math.  188  (1999),  no. 2, 339--351. 
\bibitem{T} M. Tjani,  Compact composition operators on Besov spaces.  Trans. Amer. Math. Soc.,  355  (2003),  no. 11, 4683--4698.

\bibitem{WX} K.-J. Wirths, J. Xiao, Global integral criteria for composition operators.
J. Math. Anal. Appl., 269 (2002), 702--715.

\bibitem{W} Z. Wu, Carleson Measures and Multipliers for Dirichlet Spaces. J. Functional Analysis, 169 (1999), 148--163.

\bibitem{WZZ} H. Wulan, D. Zheng, K. Zhu,  Compact composition operators on BMOA and the Bloch space. Proc. Amer. Math. Soc.,  137  (2009), 3861--3868.


\bibitem{Z} K. Zhu, Operator theory in function spaces. Monographs and textbooks in pure and applied mathematics, 139, Marcel Dekker, Inc (1990).

\bibitem{Zo} N. Zorboska, Composition operators on weighted Dirichlet spaces. Proc. Amer. Math. Soc., 126 (1998), no. 7, 2013--2023.

\end{thebibliography}
\end{document}